\theoremstyle{plain}
\newtheorem{theorem}{Theorem}[section]
\newtheorem{proposition}[theorem]{Proposition}
\newtheorem{definition}[theorem]{Definition}
\newtheorem{condition}[theorem]{Condition}
\theoremstyle{definition}
\newtheorem{remark}[theorem]{Remark}
\newcommand{\R}{\mathbb{R}}
\newcommand{\Z}{\mathbb{Z}}
\newcommand{\N}{\mathbb{N}}
\renewcommand{\P}{\mathbf{P}}
\newcommand{\Laplace}{\Delta}
\renewcommand{\div}{\operatorname{div}}
\newcommand{\abs}[1]{\left|#1\right|}
\newcommand{\norm}[1]{\left|\left|#1\right|\right|}
\newcommand*{\bigtimes}{\mathop{\raisebox{-.5ex}{\hbox{\huge{$\times$}}}}}
\newcounter{mycount}
\newenvironment{romlist}{\begin{list}{\it \roman{mycount})}%
   {\usecounter{mycount}\labelwidth=1cm\itemsep 0pt}}{\end{list}}
\author[Dirr]{N.\ Dirr}
\address[N.\ Dirr]{Department of Mathematical Sciences, University of Bath, Bath BA2 7AY, UK}
\email{n.dirr@maths.bath.ac.uk}
\urladdr{http://www.maths.bath.ac.uk/$\sim$nd235/}
\author[Dondl]{P.\ W.\ Dondl}
\address[P.\ W.\ Dondl]{Institute for Applied Mathematics, Universit\"at Bonn, Endenicher Allee 60, D-53115 Bonn, Germany}
\email{pwd@hcm.uni-bonn.de}
\urladdr{http://www.dondl.net/}
\author[Scheutzow]{M.\ Scheutzow}
\address[M.\ Scheutzow]{Fakult\"at II, Institut f\"ur Mathematik, Sekr.\ MA 7--5,
Technische Universit\"at Berlin, Strasse des 17.\ Juni 136,
D-10623 Berlin, Germany} \email{ms@math.tu-berlin.de}
\urladdr{http://www.math.tu-berlin.de/$\sim$scheutzow/}
\keywords{QEW, phase boundaries, pinning, random environment}
\subjclass[2000]{35R60, 74N20}
\title{Pinning of interfaces in random media}
\date{\today}
\begin{document} \begin{abstract}
For a model for the propagation of a curvature sensitive interface in a time independent random medium, as well as for a linearized version which is commonly referred to as Quenched Edwards-Wilkinson equation, we prove existence of a stationary positive supersolution at non-vanishing applied load. This leads to the emergence of a hysteresis that does not vanish for slow loading, even though the local evolution law is viscous (in particular, the velocity of the interface in the model is linear in the driving force).
\end{abstract}
 \maketitle
\section{Introduction}
Problems of interface evolution in heterogeneous media arise in a large number of physical models. Common to such models is a regularizing operator, for example line tension, and the competition between an external applied driving force $F$ and a force field $f(x,y)$ describing the inhomogeneities. Assuming a viscous law for the relation between the driving force and the velocity of the interface, an important question is whether rate independent hysteresis can emerge in an average sense from the interaction between the heterogeneous force field and the regularizing operator.

In this article, we consider a model for the evolution of an interface driven by its mean curvature through a random field of obstacles. Let $n \in \N, n \ge1$. Let $(\Omega, \mathcal{F}, \P)$ be a probability space, $\omega \in \Omega$. We model the interface as the graph $(x, u(x,t,\omega))$ of
a function $u \colon \R^n\times\R \times\Omega \to \R$ moving through a field $f(x,y,\omega)$ of (soft) random obstacles and a constant driving force $F$. More precisely, we consider the PDE
\begin{align}
\partial_t u(x,t,\omega) &=\sqrt{1+\abs{\nabla u(x,t,\omega)}^2} \div \left( \frac{\nabla u(x,t,\omega)}{n\sqrt{1+\abs{\nabla u(x,t,\omega)}^2}}  \right) \label{eq:MCF} \\ 
&+ \sqrt{1+\abs{\nabla u(x,t,\omega)}^2} \left(f(x,u(x,t,\omega)) + F \right) \nonumber \\
&=: \sqrt{1+\abs{\nabla u(x,t,\omega)}^2} \left( \kappa(u(x,t,\omega)) + f(x,u(x,t,\omega)) + F \right), \nonumber\\
u(x,0,\omega) &= 0. \label{eq:icMCF}
\end{align}
The first term on the right hand side of equation~\eqref{eq:MCF} is the mean curvature operator for a surface that is given as the graph of the function $u$. The second term is the driving force, split up into the non-homogeneous random part $f$ and the external constant loading $F$. The random field $f$ will be specified in Section~\ref{sec:construction} in Condition~\ref{cond:obst} and~\ref{hyp:random_obstacle_distribution}. Basically we will assume that the nonhomogeneity consists of randomly distributed individual obstacles of a fixed smooth shape and possibly random strength. By $\kappa(u(x,t,\omega))$ we denote the mean curvature operator for the graph of a function $u(\cdot,t,\omega)$ evaluated at $x$.

Equation~\eqref{eq:MCF} is motivated in the following way (see also~\cite{Coville_10a}): A very basic model for an interface (phase boundary, 
dislocation line in its slip plane etc) moving through an array of random obstacles (e.g. impurities, other
dislocation lines) in an over-damped limit (inertial effects are neglected) is the gradient flow
of the area functional plus a random bulk term. Consider thus a bounded set $U \subset \R^{n+1}$ and a smooth hypersurface $\Gamma$ that is the boundary of the set $A_\Gamma \subset U$ and define the energy
\begin{equation}
\label{eq:energy}
E(\Gamma) := \mathcal{H}^1(\Gamma) + \int_{A_\Gamma}( f(X,\omega) + F) \,\mathrm{d}X.
\end{equation}
Here, $\mathcal{H}^1$ denotes the 1-dimensional Hausdorff measure. The first inner variation (i.e., deforming the interface with the flow of
a smooth vector field) yields the mean curvature $\kappa$ and a bulk term. The viscous gradient flow with respect to the energy~\eqref{eq:energy} is thus given by the evolution equation 
\begin{equation}
\label{eq:MCF_normalvel}
v_n(x)  = \kappa(x) + f(x,\omega) + F, \quad x\in \Gamma,
\end{equation}
for the normal velocity $v_n$ of the interface. Now it is also possible to extend the notion of an evolving interface to $U = \R^{n+1}$.
The model is called {\em quenched}, because the random field does not explicitly depend on time. For forced mean curvature flow and applications, in particular in the case of periodic forcing, 
we refer to \cite{Cardaliaguet_09a,Craciun_04a,Dirr_08a}. Since we are only interested in constructing a supersolution for~\eqref{eq:MCF_normalvel} when the initial surface is flat, it is sufficient to consider the mean curvature flow for an interface that is the graph of a function $u$, i.e, $\Gamma = \{(x,y): y=u(x), x\in \R^n \}$.

If the gradient of $u$ is sufficiently small, the evolution by forced mean curvature flow (MCF)
for the graph can be approximated heuristically by a semi-linear parabolic PDE of the form
\begin{align}
\partial_t u(x,t,\omega) &= \Laplace u (x,t,\omega) + f(x,u(x,t,\omega),\omega) +F 
\quad \textrm{on $\R^n$}, \label{eq:QEW} \\
u(x,0,\omega) &= 0. \label{eq:icQEW}
\end{align}
These kinds of problems have found considerable interest in the physics community, see e.g. \cite{Kardar_97a, Kleemann_04a, Brazovskii_04a}.
They are often referred to as the Quenched Edwards-Wilkinson (QEW) model.

The goal of this article is to construct, for some $F>0$, a stationary supersolution $v$ to~\eqref{eq:MCF} and to \eqref{eq:QEW} satisfying $v\ge0$. In this article, we consider the natural case where, due to the randomness of the obstacle field, there exist arbitrarily large simply connected 
domains with positive combined driving force (i.e., where $f+F>0$), in other words, large areas without obstacle. This makes a purely analytical approach, as employed in~\cite{Dirr_06a, Dirr_08a}, insufficient.

In order to illustrate the difficulty, consider in 1+1 dimensions
a periodic array of obstacles with a forcing $F>0.$ 
Now remove each obstacle independently with rate $p\ll 1$ and increase the obstacle strength
by $\delta(p)$ in order to keep the expected value of the obstacle strength equal to the periodic one.
The resulting random configuration (periodic with random ``holes'') 
may have a solution which is unbounded as $t\to\infty$: For any $h\in \Z$ and
$l\in \N,$ there exists almost surely  a $z\in \Z$ such that the ball $B((z,h),l)$ of radius $l$ centered at $(z,l)$ is free
of obstacles. For sufficiently large $l,$  the solution in this ball will grow to a height that is larger than $h+1.$
For an appropriate choice of obstacles and parameters, this perturbation can grow like a kink-antikink pair
in a reaction diffusion equation (e.g. Allen-Cahn) with forcing, until the entire curve has moved up at least 
one unit. Then the process repeats itself at a ``hole'' at height level $h+1.$

If such a supersolution exists, by the comparison principle for the mean curvature flow and  for parabolic equations, an evolving solution $u$ with any initial condition below $v$ will always remain below $v$---the interface is pinned. Such pinning of an interface leads to a hysteresis that does not vanish for slow loading in the physical system. To see this, consider a loading cycle starting with $F=0$, increasing at first. The interface remains pinned until the driving force reaches a critical value (see Section~\ref{sec:conclusion} for a brief discussion of depinning). Above the critical force the material transforms (switching polarization, for example). Upon reversal of the driving force, the
same phenomenon occurs\footnote{There is a difficulty in modeling this behavior. In a physical situation, the obstacles (non-transforming inclusions, for example) will always obstruct the evolution of an interface moving in any direction. A reasonable way to express this would be to consider the
driving force $f(x,u(x,t,\omega),\omega)\cdot\operatorname{sign}(\partial_t u)$. Such an additional nonlinearity in the equation would make the analysis unnecessarily complicated---we thus restrict ourselves to treating the transformation ($\partial_t u \ge 0$) and the back-transformation ($\partial_t u \le 0$) separately.}. One can see that the plotting the transformed region vs.~the driving force will show a hysteresis loop that does
not vanish even for slowly varying driving force $F$.

In the following section, we construct such a supersolution under suitable assumptions on the random obstacle field. Section~\ref{sec:conclusion} provides some outlook.

\section{Construction of a supersolution}
\label{sec:construction}
We first pose two conditions that will fix the structure of the non-homogeneous term $f$ in equations~\eqref{eq:MCF} and~\eqref{eq:QEW}.
This random nonlinearity $f$ is constructed in the following way: We consider an obstacle function $\phi \in C^\infty(\R^n\times\R)$ with the properties
\begin{condition}[Obstacle shape]
\label{cond:obst}
There exist $r_1, r_0$ with $r_1>\sqrt{n} r_0>0$, so that
\begin{romlist}
\item $\phi \le 0$, $\phi(x,y) = 0$ for $\norm{(x,y)} > r_1$,
\item $\phi(x,y) \le -1$ for $\norm{(x,y)}_\infty \le r_0$.
\end{romlist}
\end{condition}
This fixes a `shape' for the individual obstacles. Here, $\norm{\cdot}$ denotes the Euclidean norm on $\R^{n+1}$, $\norm{\cdot}_\infty$ denotes the maximum-norm. The heterogeneity $f$ is now given as the sum over
individual obstacles with centers $\{ (x_i(\omega), y_i(\omega)) \}_{i\in\N}$, and strength $f_i(\omega) \ge 0$, i.e.,
$$
f(x,y,\omega) = \sum_i f_i(\omega) \phi( x-x_i(\omega), y-y_i(\omega) ).
$$

We now pose a condition on the distribution of obstacles. The condition basically states that there is a uniform lower bound for finding an obstacle of some strength (also bounded from below) in a box of volume 1, independent of its shape or position, and independently for pairwise disjoint boxes.
\begin{condition}[Obstacle distribution]
\label{hyp:random_obstacle_distribution}
The random distribution of obstacle sites $\{(x_k,y_k)\}_{k\in\N} \subset \R^{n}\times [r_1,\infty)$ and strength $\{f_k\}_{k\in\N} \subset [0,\infty)$ satisfy
\begin{romlist}
\item $(x_k,y_k)$ are distributed according to an $n+1$-dimensional Poisson process on $\R^{n}\times [r_1,\infty)$
with intensity $\lambda>0$.
\item $f_k$ are iid strictly positive random variables which are independent of $\{x_k,y_k \}$.
\end{romlist}
\end{condition}
Note that there are no obstacles crossing the line $\{y=0\}$, so at $t=0$ the interface with initial condition~\eqref{eq:icMCF} only sees the external driving force. For a small time, the velocity of the interface is thus uniformly positive for $F>0$. The comparison principle ensures that we thus have $\partial_t u \ge 0$ for all times. To see this, assume that for a smooth solution to~\eqref{eq:QEW} there exists a first time $t_0>0$ when there exists $x_0 \in \R^n: \partial_t u(x_0,t_0) = 0$. Differentiating~\eqref{eq:QEW} with respect to time yields
$$
\partial_{tt} u(x_0,t_0) = \Laplace u_t(x_0,t_0) + f_u(x_0, u(x_0,t_0)) \partial_t u(x_0,t_0) =  \Laplace u_t(x_0,t_0)  \ge 0.
$$
Noting that, for a smooth solution of~\eqref{eq:MCF}, the spatial gradient of $\partial_t u$ also vanishes at $(x_0,t_0)$, one can obtain the same
non-negativity result for mean curvature flow.

As mentioned in the introduction, the main difficulty in this work stems from the fact that there exist, with positive probability, arbitrarily large areas with arbitrarily small obstacles. We thus rescale space into boxes calling them ``open sites'' if they contain a suitably large obstacle. Theorems~\ref{thm:pinning} and~\ref{thm:pinningMCF} below then depend crucially on the existence of an infinite cluster of open sites in $\Z^{n+1}$ that is the graph of a Lipschitz function, for site percolation with $\P\{ \textrm{Site open} \} >p_c$ independently for all sites. It will be clear that the Lipschitz condition is essential for the construction of a supersolution. This result is proved in~\cite{Dondl_09f}, we repeat it here for the reader's convenience. See also~\cite{Grimmett_10b} for an improved estimate on the critical percolation threshold.
\begin{theorem}[Dirr-Dondl-Grimmett-Holroyd-Scheutzow]
\label{thm:LipPerc}
Let $n \ge 1$ and $p\in (0,1)$. We designate $z\in\Z^{n+1}$ \textbf{open} with probability
$p$, and otherwise \textbf{closed}, with different sites receiving
independent states. The corresponding probability measure on
the sample space $\Omega = \{0,1\}^{\Z^{n+1}}$ is denoted by $\P_p$.
We write $\|\cdot\|_1$ for the $1$-norm on $\Z^{n+1}$. The following holds:

For any $n\geq 1$, if $p> 1-(2n+2)^{-2} =: p_c$ then there exists
a.s.\ a (random) function $L:\Z^{n}\to \N$ with the
following properties.
\begin{romlist}
\item For each $x\in \Z^{n}$, the site $(x,L(x))\in\Z^{n+1}$
    is open.
\item For any $x,y\in \Z^{n}$ with $\|x-y\|_1=1$ we have
    $|L(x)-L(y)|\leq 1$.
\item For any isometry $\theta$ of $\Z^{n}$ the functions
    $L$ and $L\circ \theta$ have the same laws, and the
    random field $(L(x): x\in\Z^{n})$ is ergodic under each translation of
    $\Z^{n}$.
\item There exists $A=A(p,d) < \infty$ such that
$$
\P_p(L(0)>k)\leq A \nu^k, \quad k\ge 0.
$$
where $\nu=(2n+2)(1-p)<1$.
\end{romlist}
\end{theorem}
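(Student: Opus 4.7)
My plan is to construct $L$ as the pointwise infimum of all admissible Lipschitz candidates supported on open sites, and then to verify each of the four listed properties in turn; the exponential tail bound (iv) is the substantive step and implies the others.

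Let $\mathcal{L}(\omega)$ denote the random collection of $1$-Lipschitz maps $L':\Z^{n}\to\N$ (with respect to $\|\cdot\|_{1}$ on the domain) whose graph $\{(x,L'(x))\}$ lies entirely in open sites, and set $L(x):=\inf\{L'(x):L'\in\mathcal{L}(\omega)\}\in\N\cup\{\infty\}$. Granted (iv), which in particular forces $\mathcal{L}(\omega)\neq\emptyset$ almost surely, properties (i) and (ii) are elementary: because $L(x)$ is a non-negative integer, the infimum at each $x$ is attained, so some $L'_{x}\in\mathcal{L}$ witnesses that $(x,L(x))$ is open; and for $\|x-y\|_{1}=1$ one has $L(y)\le L'_{x}(y)\le L'_{x}(x)+1=L(x)+1$, with (ii) then following by symmetry. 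Property (iii) is immediate from the equivariance and measurability of $\omega\mapsto L$: horizontal lattice isometries of $\Z^{n}$ lift to measure-preserving transformations of $\bigl(\{0,1\}^{\Z^{n+1}},\P_{p}\bigr)$, and ergodicity of $(L(x))_{x\in\Z^{n}}$ under translations follows from the standard ergodicity of product Bernoulli measure.

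The analytic core is (iv), which I would prove by a Peierls-type contour argument. The strategy is to show that on the event $\{L(0)>k\}$ one can extract from $\omega$ a combinatorial witness: a self-avoiding nearest-neighbour path $\pi=(v_{0},\dots,v_{m})$ in $\Z^{n+1}$ of length $m\ge k$, anchored near the origin and consisting entirely of closed sites. A natural constructive route is to realise $L$ greedily: start from $L^{(0)}\equiv 0$ and at each step raise $L^{(t)}(x)$ by one at every site $x$ for which either $(x,L^{(t)}(x))$ is closed or the Lipschitz constraint with a neighbour is violated, iterating until stabilisation. The sites at which increases occur form a connected set of closed sites whose spine yields a nearest-neighbour path of the required length. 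Once this geometric step is in place, the tail bound reduces to an enumeration: the number of admissible witness paths of length $m$ from a bounded neighbourhood of the origin is at most $C(2n+2)^{m}$, and each is closed with probability $(1-p)^{m}$, so
\[
\P_{p}(L(0)>k)\;\le\;\sum_{m\ge k}C\,(2n+2)^{m}(1-p)^{m}\;=\;A\,\nu^{k},
\]
with $\nu=(2n+2)(1-p)<1$ under the hypothesis $p>p_{c}$.

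The principal obstacle is the geometric half: extracting a \emph{connected} chain of closed sites of length at least $k$ from the mere fact that every admissible Lipschitz function $L'$ has $L'(0)>k$. Everything else is routine -- elementary lattice combinatorics for the enumeration and standard measure-theoretic arguments for invariance and ergodicity -- and the exponential tail closes the loop back to (i) and (ii) by ensuring $\mathcal{L}(\omega)\neq\emptyset$ almost surely.
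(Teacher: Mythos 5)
The paper does not actually prove Theorem~\ref{thm:LipPerc}; it states the result and explicitly defers the proof to the cited reference \cite{Dondl_09f}, so there is no ``paper proof'' to match against. Your overall strategy --- defining $L$ as the pointwise infimum over all $1$-Lipschitz functions with open graph, deducing (i) and (ii) from attainment of the integer infimum, reading off (iii) from equivariance and ergodicity of the Bernoulli product measure, and reducing (iv) to a Peierls-type enumeration of blocking paths with branching factor $2n+2$ --- is indeed the strategy of \cite{Dondl_09f}, and the pieces you do carry out (the derivation of (i), (ii), (iii) from the minimal-function definition, and the geometric-series summation once the path lemma is granted) are correct.

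The gap, however, is precisely the step you flag as ``the principal obstacle,'' and it is not a minor technicality but the content of the theorem. You need a deterministic lemma of the form: if $L(0)>k$, then there exists a suitably structured path of \emph{distinct closed} sites in $\Z^{n+1}$, starting at a determined location (so the enumeration has a fixed anchor), of length at least $k$, with at most $2n+2$ admissible continuations at each step. Your proposed greedy iteration does not deliver this as stated: when $L^{(t)}(x)$ is raised because a Lipschitz constraint with a neighbour is violated (rather than because $(x,L^{(t)}(x))$ is closed), the site $(x,L^{(t)}(x))$ need not be closed, so ``the sites at which increases occur form a connected set of closed sites'' is false. One must instead trace each Lipschitz-driven increase back through a chain of neighbouring columns until it terminates at a genuinely closed site, and then argue that the resulting witness is a path of distinct closed sites with a controlled step set --- the specific ``$\lambda$-path'' combinatorics in \cite{Dondl_09f}. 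Relatedly, your enumeration asserts at most $C(2n+2)^{m}$ witness paths of length $m$ and simultaneously requires the $m$ visited sites to be distinct in order to multiply by $(1-p)^{m}$; a generic nearest-neighbour path need not have distinct sites, so the step set and the distinctness must both come out of the (missing) structural lemma, not be assumed. Without that lemma the proof is incomplete.
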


We first show the result asserting the existence of a stationary positive supersolution to the semilinear equation~\eqref{eq:QEW}, since the calculations are somewhat simpler. 
\begin{theorem}[Existence of a pinned solution for QEW]
\label{thm:pinning}
If Conditions~\ref{cond:obst} and~\ref{hyp:random_obstacle_distribution} are satisfied, then there exists $F^*>0$ and a non-negative $v \colon \R^n\times\Omega \to [0,\infty)$ so that $0 \ge \Laplace v(x,\omega) +  f(x, v(x,\omega), \omega) + F^*$ a.s., i.e., any solution to~\eqref{eq:QEW} with initial condition~\eqref{eq:icQEW} and $F\le F^*$ gets pinned.
\end{theorem}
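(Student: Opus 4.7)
\emph{Strategy and coarse-graining.} The plan is to translate the problem into a Bernoulli site percolation model on $\Z^{n+1}$ satisfying the hypothesis of Theorem~\ref{thm:LipPerc}, extract from it a Lipschitz surface of strong obstacles, and then assemble $v$ as a lower envelope of downward paraboloids pinned to those obstacles. To set up the percolation, fix a scale $R\ge 4r_1$ and a strength threshold $\epsilon_0>0$ with $\P(f_1\ge\epsilon_0)>0$ (possible since $f_1>0$ a.s.). Partition $\R^n\times[0,\infty)$ into cubes $B_z=Rz+[0,R]^{n+1}$ for $z\in\Z^n\times\N$, and declare $z$ \emph{open} if the inner sub-cube $Rz+[R/4,3R/4]^{n+1}$ contains at least one Poisson atom $(x_i,y_i)$ with $f_i\ge\epsilon_0$. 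These inner sub-cubes are pairwise disjoint, so the independent scattering property of the Poisson process combined with the independence of the strengths $\{f_i\}$ renders the events $\{z\text{ open}\}$ i.i.d.\ Bernoulli with
\begin{equation*}
p = 1-\exp\!\bigl(-\lambda\,\P(f_1\ge\epsilon_0)\,(R/2)^{n+1}\bigr),
\end{equation*}
and $p>p_c=1-(2n+2)^{-2}$ once $R$ is taken sufficiently large. Theorem~\ref{thm:LipPerc} then supplies almost surely a Lipschitz function $L:\Z^n\to\N$ with $(k,L(k))$ open for every $k$; in each such box pick one qualifying obstacle with centre $(x^\ast_k,y^\ast_k)$ and strength $f^\ast_k\ge\epsilon_0$, noting that $y^\ast_k\ge R/4\ge r_1$.

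\emph{Construction and verification.} Choose $F^\ast>0$ with $F^\ast\le r_1/(2R^2)$ and set
\begin{equation*}
v_k(x):=y^\ast_k-\frac{F^\ast}{2n}\,|x-x^\ast_k|^2,\qquad v(x):=\min\!\bigl\{v_k(x):k\in\Z^n,\;\|x-x^\ast_k\|_\infty\le 2R\bigr\}.
\end{equation*}
The admissible index set is non-empty (the column containing $x$ always qualifies) and finite, so $v$ is a well-defined lower semicontinuous function. For every admissible $k$ one has $|x-x^\ast_k|^2\le n(2R)^2$, so that $v(x)\ge r_1-2F^\ast R^2\ge 0$. Each $v_k$ is smooth with $\Delta v_k=-F^\ast$, and since $f\le 0$ by Condition~\ref{cond:obst}(i) one has $\Delta v_k+f(x,v_k(x))+F^\ast = f(x,v_k(x))\le 0$ classically. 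A locally restricted pointwise minimum of classical supersolutions is a viscosity supersolution of the same semilinear equation: any smooth $\varphi\le v$ with $\varphi(x_0)=v(x_0)=v_{k_0}(x_0)$ also touches $v_{k_0}$ from below at $x_0$, and the classical comparison of second derivatives together with $\varphi(x_0)=v_{k_0}(x_0)$ yields $-\Delta\varphi(x_0)\ge F^\ast\ge f(x_0,\varphi(x_0))+F^\ast$. The parabolic comparison principle then pins any solution of \eqref{eq:QEW} with $u(\cdot,0)=0\le v$.

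\emph{Main obstacle.} The delicate step is the passage from the discrete percolation statement to a coherent continuous supersolution. The minimum must be taken over a \emph{variable} index set to avoid the divergence to $-\infty$ that an unrestricted minimum of paraboloids suffers, but the set must be stable enough under perturbations of $x$ that the viscosity argument survives at points where the minimising $k$ changes; this is exactly where the Lipschitz property $|L(k)-L(k')|\le 1$ of Theorem~\ref{thm:LipPerc} is essential, keeping the heights of neighbouring paraboloids within $2R$ of each other and thus keeping the envelope above $r_1-2F^\ast R^2\ge 0$. A secondary bookkeeping point is the simultaneous choice of the three parameters $R$, $\epsilon_0$, $F^\ast$ meeting $p>p_c$, $R\ge 4r_1$, and $F^\ast\le r_1/(2R^2)$; this is feasible because $\epsilon_0$ governs $p$ independently of $F^\ast$, and $R$ may be freely increased to raise $p$ at the sole cost of requiring a smaller $F^\ast$.
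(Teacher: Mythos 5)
Your coarse-graining into a Bernoulli site model on $\Z^{n+1}$ and the appeal to Theorem~\ref{thm:LipPerc} match the paper's strategy (cf.\ Proposition~\ref{prop:perc_obst}), and that part is fine. The construction of $v$ from the percolating obstacle surface, however, has a genuine gap, and in fact the envelope you build is not a viscosity supersolution.

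The problem is in the verification step. You show $\Delta v_k + f(x,v_k(x)) + F^* = f(x,v_k(x)) \le 0$ using only $f\le 0$ from Condition~\ref{cond:obst}(i); Condition~\ref{cond:obst}(ii) (that $\phi\le -1$ near the obstacle centre) is never used. This is a structural red flag: no nonnegative function on $\R^n$ can satisfy $\Delta v + F^*\le 0$ in the viscosity sense when $f\equiv 0$. Indeed $w := v + \tfrac{F^*}{2n}|x|^2$ would be superharmonic and bounded below by $\tfrac{F^*}{2n}|x|^2$, so the mean value inequality $w(0)\ge \fint_{\partial B_R} w \ge \tfrac{F^*}{2n}R^2$ gives a contradiction for large $R$. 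So the inequality cannot close merely via $f\le 0$; the negative bump in $f$ near the obstacle centres has to enter somewhere, and in your construction it never does.

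Where the argument actually breaks is exactly the point you flag as delicate and then dismiss. Writing $\tilde v_k(x) = v_k(x)$ for $\|x-x^*_k\|_\infty\le 2R$ and $\tilde v_k(x) = +\infty$ otherwise, we have $v=\min_k \tilde v_k$. At a point $x_0$ with $\|x_0-x^*_{k_0}\|_\infty = 2R$ where $v_{k_0}$ is the active minimiser (this occurs, e.g., near a local minimum of $L$: since $F^*\le r_1/(2R^2)$ the paraboloids are so flat that the lowest one wins on the whole overlap region, never crossing its neighbours), the constraint $\varphi\le v$ forces $\varphi\le v_{k_0}$ only on one side of $x_0$. A one-sided touching does \emph{not} control $D^2\varphi(x_0)$: already in one dimension, $\psi(x)=a(x_0-x)+b(x_0-x)^2\ge 0$ for $x\le x_0$ with $a>0$ permits $b<0$ arbitrarily negative, hence $\Delta\varphi(x_0)=\Delta v_{k_0}(x_0)-\psi''(x_0)$ can be made arbitrarily large positive, violating $\Delta\varphi(x_0)+f(x_0,\varphi(x_0))+F^*\le 0$ whenever $f(x_0,\varphi(x_0))=0$. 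The Lipschitz property of $L$ keeps the heights of neighbouring paraboloids within $O(R)$ of each other, but since you need $F^*R^2 = O(r_1)$ to keep $v\ge 0$, the crossings between neighbouring paraboloids lie at distance $O(1/F^*)\gg R$ from the centres, i.e.\ far outside the truncation boxes. So the envelope genuinely has upward jumps at the truncation boundaries, and the reduction ``$\varphi$ touches $v$ from below $\Rightarrow$ $\varphi$ touches $v_{k_0}$ from below'' is false there.

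The paper avoids this by building a local supersolution with the opposite geometry: $v_\mathrm{local}$ is \emph{radially increasing}, negative (in $(-r_0,0]$) near the centre so that $\phi\le -1$ can absorb a \emph{positive} Laplacian $F_\mathrm{in}$, and obeys a Neumann condition $\partial_\nu v_\mathrm{out}=0$ at $r_\mathrm{out}$ so the minimum over shifted copies is continuous (each centre owns its Voronoi cell, and the pieces meet with a downward derivative jump, which is the correct sign for a supersolution). Heights are then aligned by a separate mollified gluing function $v_\mathrm{glue}$ whose Laplacian is $O(h/d^2)$, controlled by choosing $d$ large. If you want to keep your downward-paraboloid picture, you would at least have to cap each paraboloid at a level strictly below the height of all admissible neighbours at the corresponding box boundary, and verify that the resulting kink points downward — but then you will be forced to use $\phi\le -1$ at the centre to make the Laplacian there positive, at which point you have essentially reinvented the paper's $v_\mathrm{local}$.
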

The proof consists of a piecewise construction of the supersolution, so it is first necessary to give some estimates on the components that will be used. We denote by $B_r$ the open unit ball of radius $r$ around $0$.

\begin{definition}[Local solution]
\label{def:local}
Given $r_\mathrm{out} > r_\mathrm{in} > 0$, $F_\mathrm{in}>0$, and $F_\mathrm{out}<0$, let $v_\mathrm{in}$ be the unique solution of $\Laplace v_\mathrm{in} = F_\mathrm{in}$ on $B_{r_\mathrm{in}} \subset \R^{n}$, $v_\mathrm{in} = 0$ on $\partial B_{r_\mathrm{in}}$. Let $v_\mathrm{out}$ be the unique solution of $\Laplace v_\mathrm{out} = F_\mathrm{out}$ on $B_{r_\mathrm{out}} \setminus B_{r_\mathrm{in}} \subset \R^{n}$, with boundary conditions $v_\mathrm{out} = 0$ on $\partial B_{r_\mathrm{in}}$ and
$\nu \cdot \nabla v_\mathrm{out} = 0$ on $\partial B_{r_\mathrm{out}}$.

We define
\begin{equation*}
v_\mathrm{local} := \left\{ \begin{array}{ll}
v_\mathrm{in} & \textrm{on $B_{r_\mathrm{in}}$}, \\
v_\mathrm{out} & \textrm{on $B_{r_\mathrm{out}} \setminus B_{r_\mathrm{in}}$}, \\
\lim_{r\to r_\mathrm{out}} v_\mathrm{out}(r) & \textrm{on $\partial B_{r_\mathrm{out}}$}, \\
\infty & \textrm{otherwise}. \end{array}\right.
\end{equation*}
\end{definition}

\begin{proposition}
\label{prop:local_sol}
The function $v_\mathrm{local}$ defined above satisfies
\begin{romlist}
\item $v_\mathrm{local}$ is radially strictly increasing on $B_{r_\mathrm{out}}$,
\item the graph of $v_\mathrm{local}$ restricted to $B_{r_\mathrm{in}}$ is contained in the set $B_{r_\mathrm{in}} \times [- \frac{F_\mathrm{in}}{2n}r_\mathrm{in}^2, 0]$
\item given $\bar{f}>0$, if
\begin{equation}
\label{eq:jump_cond}
F_\mathrm{in} r_\mathrm{in} \ge \abs{F_\mathrm{out}} \left(-r_\mathrm{in}+ \frac{r_\mathrm{out}^n}{r_\mathrm{in}^{n-1}} \right),
\end{equation}
and if $\max\{r_\mathrm{in},\frac{F_\mathrm{in}}{2n}r_\mathrm{in}^2\} \le r_0$, then $v_\mathrm{local}$ satisfies, in the sense of distributions (and in the sense of viscosity solutions),
\begin{equation}
\label{eq:local_supers}
0 \ge \Laplace v_\mathrm{local} + \bar{f} \phi(\cdot, v_\mathrm{local}(\cdot)+r_0) +F \quad \textrm{on $B_{r_\mathrm{out}}$}
\end{equation}
for all $F \le \min\{-F_\mathrm{out}, \bar{f}-F_\mathrm{in}\}$.
\end{romlist}
\end{proposition}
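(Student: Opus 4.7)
The plan is to solve the two Poisson problems explicitly by radial symmetry, read off (i) and (ii) from the closed-form expressions, and for (iii) split $B_{r_\mathrm{out}}$ into the interior ball, the open annulus, and the interface sphere $\partial B_{r_\mathrm{in}}$, treating each region separately.

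For the inner problem one gets $v_\mathrm{in}(r) = \frac{F_\mathrm{in}}{2n}(r^2 - r_\mathrm{in}^2)$ with $v_\mathrm{in}'(r) = \frac{F_\mathrm{in}}{n} r$, from which (ii) is immediate and monotonicity on $[0,r_\mathrm{in}]$ is clear. For the annulus, multiplying the radial equation by $r^{n-1}$ and using the Neumann condition at $r_\mathrm{out}$ gives
\[
v_\mathrm{out}'(r) = \frac{F_\mathrm{out}}{n}\Bigl( r - \frac{r_\mathrm{out}^{n}}{r^{n-1}} \Bigr),
\]
which is strictly positive on $(r_\mathrm{in},r_\mathrm{out})$ since both factors are negative. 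Combined with the matching $v_\mathrm{in}(r_\mathrm{in}) = v_\mathrm{out}(r_\mathrm{in}) = 0$, this yields (i).

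For (iii), on $B_{r_\mathrm{in}}$ the hypothesis $\max\{r_\mathrm{in}, \frac{F_\mathrm{in}}{2n} r_\mathrm{in}^2\} \le r_0$ together with (ii) gives $\|(x, v_\mathrm{local}(x) + r_0)\|_\infty \le r_0$, so Condition~\ref{cond:obst}(ii) yields $\phi(\cdot, v_\mathrm{local}(\cdot) + r_0) \le -1$ on $B_{r_\mathrm{in}}$, and the pointwise inequality collapses to $F_\mathrm{in} - \bar f + F \le 0$, i.e.\ $F \le \bar f - F_\mathrm{in}$. On the annulus the trivial bound $\phi \le 0$ reduces the inequality to $F_\mathrm{out} + F \le 0$, i.e.\ $F \le -F_\mathrm{out}$. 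Both constraints are built into the hypothesis on $F$.

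The main obstacle is the matching at $\partial B_{r_\mathrm{in}}$, where $v_\mathrm{local}$ is continuous but its radial derivative jumps from $v_\mathrm{in}'(r_\mathrm{in}^-) = \frac{F_\mathrm{in}}{n} r_\mathrm{in}$ to $v_\mathrm{out}'(r_\mathrm{in}^+) = \frac{|F_\mathrm{out}|}{n}\bigl(\frac{r_\mathrm{out}^n}{r_\mathrm{in}^{n-1}} - r_\mathrm{in}\bigr)$. Integration by parts against a nonnegative test function in $\mathcal{D}(B_{r_\mathrm{out}})$ shows that as a distribution $\Delta v_\mathrm{local}$ acquires on $\partial B_{r_\mathrm{in}}$ a surface term with weight $v_\mathrm{out}'(r_\mathrm{in}^+) - v_\mathrm{in}'(r_\mathrm{in}^-)$, and hypothesis~\eqref{eq:jump_cond} is exactly the inequality forcing this weight to be non-positive; so the two pointwise inequalities assemble into a distributional supersolution on $B_{r_\mathrm{out}}$. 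For the viscosity reading, the same sign makes the graph have a concave radial corner at $\partial B_{r_\mathrm{in}}$: any smooth $\psi$ touching $v_\mathrm{local}$ from below at an interface point would require its radial derivative there to lie simultaneously in $[v_\mathrm{in}'(r_\mathrm{in}^-),\infty)$ (tested from the inside) and $(-\infty, v_\mathrm{out}'(r_\mathrm{in}^+)]$ (from the outside), which is incompatible under~\eqref{eq:jump_cond} unless the two one-sided slopes coincide—in which case $v_\mathrm{local}$ is $C^1$ across $\partial B_{r_\mathrm{in}}$ and the viscosity condition follows from the pointwise cases. Everything else is elementary computation with the radial Poisson equation together with the obstacle-shape assumption in Condition~\ref{cond:obst}.
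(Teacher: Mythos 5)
Your proposal is correct and follows essentially the same route as the paper: explicit radial formulas for $v_\mathrm{in}$ and $v_\mathrm{out}$, pointwise verification inside the ball and on the annulus using Condition~\ref{cond:obst}, and the jump condition~\eqref{eq:jump_cond} to handle the interface $\partial B_{r_\mathrm{in}}$. The only stylistic difference is that the paper disposes of the interface by invoking ``min of two supersolutions is a supersolution,'' whereas you carry out the distributional surface-term computation and the viscosity touching argument explicitly, which is a welcome expansion of the same idea rather than a different approach.
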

\begin{proof}
The individual assertions are proved by a simple calculation.
\begin{romlist}
\item Follows immediately from the maximum principle.
\item The function $v_\mathrm{in}$ is nothing but a parabola, namely $v_\mathrm{in}(x) = \frac{F_\mathrm{in}}{2n}\abs{x}^2 - \frac{F_\mathrm{in}}{2n}r_\mathrm{in}^2$. The assertion can be read off this form.
\item From {\it ii)} and from the assumption on $\phi$ in~\ref{cond:obst}, one can see that for $x\in B_{r_\mathrm{in}}$, we have $\phi(x, v_\mathrm{local}(x)+r_0) \le -1$. The property~\eqref{eq:local_supers} for each individual piece of $v_\mathrm{local}$ can then be seen directly from the definition of $v_\mathrm{local}$. The assertion follows by noting that equation~\eqref{eq:jump_cond} implies that the first derivative jumps down going radially outward across $\partial B_{r_\mathrm{in}}$\footnote{The term $\frac{F_\mathrm{in}}{n} r_\mathrm{in}$ is the radial derivative of $v_\mathrm{in}$ and 
$-F_\mathrm{out} \left(\frac{-r_\mathrm{in}}{n} + \frac{r_\mathrm{out}^n}{n r_\mathrm{in}^{n-1}} \right)$ is the radial derivative of $v_\mathrm{out}$ at $\partial B_{r_\mathrm{in}}$ }. This implies that $v_\mathrm{local}$ is the pointwise minimum of two supersolution, thus a supersolution itself. The Laplacian is then a negative measure.
\end{romlist}
\end{proof}

\begin{definition}[Rescaling]
Given $l>2r_1$, $d>0$, $h >0$, and $k = (k_1, k_2, \dots, k_n) \in \Z^n$, $j\in\N$, let 
\begin{romlist}
\item $\tilde{Q}_k := \bigtimes_{i=1}^n [k_i (l+d) +r_1, k_i (l+d)+l-r_1]$,
\item $\overline{Q}_k := \bigtimes_{i=1}^n [k_i (l+d), k_i (l+d)+l]$,
\item $\overline{Q} := \bigcup_{k} \overline{Q}_k$ and $\overline{D} := \R^n \setminus \overline{Q}$,
\item $\tilde{Q}_{k,j} = \tilde{Q}_k \times [(j-1) h+r_1, jh+r_1]$.
\end{romlist}
Here $\bigtimes_{i=1}^n$ denotes the cartesian product of the $n$ intervals following.
\end{definition}
\begin{remark}
The sets $\overline{Q}$ and $\overline{D}$ split $\R^n$ into cubes, each separated by a distance $d$. The reduced cubes $\tilde{Q}_k$ are smaller
by the length $2r_1$ in every dimension, so that an obstacle with center in $\tilde{Q}_k$ fits completely inside $\overline{Q}_k$. The sets $\tilde{Q}_{k,j}$
are extended in the $n+1$-st direction by a height $h$.
\end{remark}

\begin{proposition}[Percolating obstacles]
\label{prop:perc_obst}
Given $h>0$, fix $l(h)>0$ and $\bar{f}>0$ s.t. 
$$
1-\exp\{-\lambda \abs{A} \cdot \P\{f_1\ge\bar{f} \} \} > p_c
$$
for $\abs{A} = (l-2r_1)^n h$, i.e., $l(h) = C_0(p_c, \lambda, \P\{f_1\ge \bar{f} \}) h^{-1/n} + 2r_1$. Then there exists a
random function $L\colon \Z^n \to \N$ with Lipschitz constant 1, such that, a.s., for all $k\in \Z^n$, there exists $i \in \N$ such that
$(x_i, y_i) \in \tilde{Q}_{k,L(k)}$ and $f_i \ge \bar{f}$.

For each $k\in \Z^n$ we select one obstacle index $i\in \N$ with the above property and collect these obstacle indices in the set $I$.
\end{proposition}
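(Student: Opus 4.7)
The plan is to encode the existence of an admissible obstacle in each rescaled box as an independent site percolation on $\Z^{n+1}$ and then invoke Theorem~\ref{thm:LipPerc}. I would declare the site $(k,j)\in\Z^n\times\N$ \emph{open} if some obstacle index $i$ satisfies $(x_i,y_i)\in\tilde Q_{k,j}$ and $f_i\ge\bar f$, and closed otherwise. Two things need to be verified. Firstly, \emph{independence}: the boxes $\tilde Q_{k,j}$ are pairwise disjoint by construction---separated by $d+2r_1$ in each of the first $n$ coordinates and tiling $[r_1,\infty)$ in the height direction---so the restrictions of the marked Poisson process (centers together with strengths) to these disjoint boxes are independent, and hence so are the site states. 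Secondly, \emph{supercriticality}: by Poisson thinning, the subprocess of centers whose mark exceeds $\bar f$ is Poisson of intensity $\lambda\,\P\{f_1\ge\bar f\}$; since $|\tilde Q_{k,j}|=(l-2r_1)^n h$, the probability that $\tilde Q_{k,j}$ contains such a center equals
\[
p \;=\; 1-\exp\bigl\{-\lambda(l-2r_1)^n h\cdot\P\{f_1\ge\bar f\}\bigr\},
\]
and the prescribed choice of $l(h)$ guarantees $p>p_c$.

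To apply Theorem~\ref{thm:LipPerc}, which is phrased for iid site states on all of $\Z^{n+1}$, I would extend the site field to the non-physical levels $j\le 0$ by auxiliary iid Bernoulli$(p)$ variables, independent of the obstacle process. The theorem then supplies a $\|\cdot\|_1$-Lipschitz function $\tilde L\colon\Z^n\to\N$ with $(k,\tilde L(k))$ open for every $k$; by re-indexing the height coordinate (using $j\mapsto j-1$, or equivalently setting $L:=\tilde L+1$ if necessary) one obtains the desired $L\colon\Z^n\to\N$ with $L\ge 1$ and $\tilde Q_{k,L(k)}$ containing at least one obstacle of strength $\ge\bar f$. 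For each $k$, selecting any obstacle index realising this condition and collecting them defines $I$.

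I do not foresee a serious obstacle; the argument is essentially bookkeeping around Theorem~\ref{thm:LipPerc}. The one point meriting care is the verification of independence across sites, which rests on the standard fact that the restrictions of a Poisson process to disjoint Borel sets are independent, combined with the assumed independence of the iid marks $\{f_k\}$ from the point process $\{(x_k,y_k)\}$.
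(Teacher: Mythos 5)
Your proposal is correct and follows essentially the same route as the paper's (very terse) proof: identify each cuboid $\tilde Q_{k,j}$ with a site in $\Z^{n+1}$, declare it open if it contains a sufficiently strong obstacle, check that this gives iid Bernoulli site states with parameter above $p_c$ (via disjointness of the boxes, independence of Poisson restrictions to disjoint sets, and Poisson thinning for the mark $\ge\bar f$), and invoke Theorem~\ref{thm:LipPerc}. The paper omits all of this as ``a direct consequence''; you have simply supplied the bookkeeping, including the sensible (if slightly pedantic) padding of the site field for $j\le 0$ so that the theorem can be applied verbatim on $\Z^{n+1}$.
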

\begin{proof}
This is a direct consequence from Theorem~\ref{thm:LipPerc}. Indeed, considering a cuboid $\tilde{Q}_{k,j}$ open if it contains an obstacle
of strength greater or equal $\bar{f}$, we find that a cuboid is open with probability greater than $p_c$.
\end{proof}

\begin{definition}[Flat supersolution]
\label{def:vflat}
We define the flat supersolution $v_\mathrm{flat} \colon \R^n \to \R$ as 
$v_\mathrm{flat}(x) := \min_{i \in I} v_\mathrm{local}(x-x_i)$.
\end{definition}
\begin{proposition}
\label{prop:flat}
Fix $h>0$, $\bar{f}>0$, and $l(h)$ as in Proposition~\ref{prop:perc_obst}. Let $r_\mathrm{out} = \sqrt{n}(l(h)+\frac{d}{2}-r_1)$ and assume that
$r_\mathrm{out}$, $r_\mathrm{in}$, $F_\mathrm{in}$ and $F_\mathrm{out}$ satisfy the conditions in Proposition~\ref{prop:local_sol}. Then 
$v_\mathrm{flat}$ satisfies, a.s., in the sense of distributions (and in the sense of viscosity solutions)
\begin{equation*}
0 \ge \Laplace v_\mathrm{flat}(x) + \sum_{i\in I} \bar{f} \phi(x-x_i, v_\mathrm{flat}(x)+r_0) +F \quad \textrm{on $\R^n$}
\end{equation*}
for all $F \le \min\{-F_\mathrm{out}, \bar{f}-F_\mathrm{in}\}$.
\end{proposition}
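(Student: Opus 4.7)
The plan is to prove the inequality pointwise by combining two observations: the pointwise minimum of supersolutions is again a supersolution, and $\phi \le 0$, so including the remaining obstacle terms only makes the right-hand side more negative. Before this, I must check that $v_\mathrm{flat}$ is finite on all of $\R^n$, i.e., that the family of balls $\{B_{r_\mathrm{out}}(x_i)\}_{i\in I}$ covers $\R^n$.

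\smallskip
\noindent\textbf{Step 1 (covering).} First I would partition $\R^n$ into the cubes
\begin{equation*}
C_k := \bigtimes_{i=1}^n \bigl[k_i(l+d)-\tfrac{d}{2},\, k_i(l+d)+l+\tfrac{d}{2}\bigr),
\end{equation*}
i.e.\ the Voronoi cells of the centres of the $\overline{Q}_k$, which tile $\R^n$. For $x \in C_k$ and any $\xi \in \tilde{Q}_k$, the coordinate-wise separation is at most $l - r_1 + d/2$ (by symmetry of the Voronoi centring; $l > 2r_1$ is used only to ensure $\tilde{Q}_k \subset C_k$), hence $\|x-\xi\| \le \sqrt{n}(l - r_1 + d/2) = r_\mathrm{out}$. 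By Proposition~\ref{prop:perc_obst}, for each $k$ there exists $i \in I$ with $(x_i,y_i) \in \tilde{Q}_{k,L(k)}$, and in particular with $x_i \in \tilde{Q}_k$. Thus $x \in B_{r_\mathrm{out}}(x_i)$ and $v_\mathrm{flat}(x) \le v_\mathrm{local}(x - x_i) < \infty$.

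\smallskip
\noindent\textbf{Step 2 (inheritance of the supersolution inequality).} Next, I would fix $x_0 \in \R^n$ and pick $i^\star \in I$ attaining $v_\mathrm{flat}(x_0) = v_\mathrm{local}(x_0 - x_{i^\star})$, which exists by Step~1. On $B_{r_\mathrm{out}}(x_{i^\star})$ the translate $v_\mathrm{local}(\cdot - x_{i^\star})$ is, by Proposition~\ref{prop:local_sol}\emph{iii)}, a viscosity (equivalently distributional) supersolution of
\begin{equation*}
0 \ge \Laplace v + \bar{f}\,\phi(\cdot - x_{i^\star},\, v(\cdot)+r_0) + F.
\end{equation*}
Since $v_\mathrm{flat}$ is the pointwise minimum of the family $\{v_\mathrm{local}(\cdot-x_i)\}_{i\in I}$ (extended by $+\infty$ beyond each ball) and touches $v_\mathrm{local}(\cdot - x_{i^\star})$ from below with equality at $x_0$, the standard min-of-supersolutions principle transfers this inequality to $v_\mathrm{flat}$ at $x_0$. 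Finally, each remaining term $\bar{f}\,\phi(x_0 - x_j,\, v_\mathrm{flat}(x_0)+r_0)$ with $j \neq i^\star$ is non-positive by Condition~\ref{cond:obst}\emph{i)}, so all of them can be inserted on the right-hand side without breaking the inequality, yielding exactly the claim.

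\smallskip
\noindent\textbf{Main obstacle.} The only genuinely subtle point is the behaviour on the crease set where the minimum is attained by several indices and $v_\mathrm{flat}$ is merely Lipschitz. The viscosity framework handles this cleanly via the infimum-of-supersolutions property; the distributional reading uses that the Laplacian of a minimum of two smooth supersolutions equals the piecewise Laplacian plus a non-positive singular measure along the crease, which only helps the inequality. The tight geometric covering in Step~1 is the reason the proposition singles out exactly $r_\mathrm{out} = \sqrt{n}(l + d/2 - r_1)$: any strictly smaller radius would leave corner points of the Voronoi tiling uncovered, and Step~2 would fail at such points.
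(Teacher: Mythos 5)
Your proposal is correct and follows essentially the same route as the paper: show finiteness of $v_\mathrm{flat}$ via a covering argument, then invoke the minimum-of-supersolutions property together with $\phi\le 0$. The paper's own proof is just two sentences; your Step 1 (the explicit Voronoi tiling by the cubes $C_k$ and the coordinate-wise bound $l + d/2 - r_1$) is precisely the covering fact the paper asserts without computation, and your Step 2 is the standard touching-from-below argument the paper compresses into ``minimum over shifted copies \dots\ which is a supersolution.'' One minor point of phrasing: the ``min-of-supersolutions principle'' is usually stated for a family of supersolutions of a single fixed equation, whereas here each $v_\mathrm{local}(\cdot - x_i)$ a priori solves a different inequality (with its own $\phi(\cdot - x_i, \cdot)$). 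The clean way to say it is that each translate is already a supersolution of the \emph{common} inequality with the full sum $\sum_{j\in I}\bar f\,\phi(\cdot - x_j,\cdot + r_0)$, because the extra terms are non-positive, and then take the minimum; your proof effectively does this (applying the viscosity touching argument at $x_0$ and adding the non-positive terms afterwards), so the content is right, only the attribution to the ``standard'' principle is slightly off. This does not affect correctness.
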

\begin{proof}
Since $v_\mathrm{flat}$ is a minimum over shifted copies of the function $v_\mathrm{local}$, which is a supersolution where it is not equal $+\infty$ as proved in
Proposition~\ref{prop:local_sol}, it is enough to show that $v_\mathrm{flat}(x) < +\infty$ for all $x\in \R^n$. This is, however, true by the
choice of $r_\mathrm{out} = \sqrt{n}(l(h)+\frac{d}{2}-r_1)$ with the property that the union over all $k\in \Z^n$ of closed balls of this radius with centers anywhere in $\tilde{Q}_k$ cover all of $\R^n$.
\end{proof}
\begin{remark}
Since the function $v_\mathrm{local}$ is strictly increasing on $B_{r_\mathrm{out}}$, the minimization process assigns each obstacle center $x_i$
its Voronoi cell. On the Voronoi cell associated with $x_i$, the function $v_\mathrm{local}$ centered at $x_i$ attains the minimum.
\end{remark}

\begin{proposition}[Gluing function]
\label{prop:glue}
Fix $h>0$, $d>0$, $l>0$. Let $L\colon \Z^n \to \R$ be a function with the property that if $x,y\in \Z^{n}$ with $\|x-y\|_1=1$ we have
$|L(x)-L(y)|\leq 2h$. Then there exists $C_1>0$, depending only on the dimension $n$, such that there exists a smooth function
$v_\mathrm{glue}\colon \R^n\to \R$ such that
\begin{romlist}
\item for all $k\in \Z^n$, $v_\mathrm{glue}(x) = L(k)$ if $x\in \overline{Q}_k$,
\item $\operatorname{supp} \nabla v_\mathrm{glue} \subset \overline{D}$,
\item $\norm{D^2  v_\mathrm{glue}}_\infty \le C_1\frac{h}{d^2}$,
\item $\norm{\nabla  v_\mathrm{glue}}_\infty \le C_1\frac{h}{d}$.
\end{romlist}
\end{proposition}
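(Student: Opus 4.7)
The plan is to construct $v_{\mathrm{glue}}$ as a weighted sum $\sum_{k\in\Z^n} L(k)\,\psi_k$, where $\{\psi_k\}_{k\in\Z^n}$ is a smooth tensor-product partition of unity with $\psi_k\equiv 1$ on $\overline{Q}_k$ and $\operatorname{supp}\psi_k\subset \overline{Q}_k+[-d,d]^n$. Fix, once and for all, a smooth monotone function $\chi\colon\R\to[0,1]$ with $\chi\equiv 0$ on $(-\infty,0]$ and $\chi\equiv 1$ on $[1,\infty)$, and define the one-dimensional bumps
\[
\eta_k(t) := \chi\!\left(\tfrac{t-k(l+d)+d}{d}\right)-\chi\!\left(\tfrac{t-k(l+d)-l}{d}\right),\qquad k\in\Z.
\]
A direct case analysis shows $\eta_k\equiv 1$ on $[k(l+d),k(l+d)+l]$, $\eta_k\equiv 0$ outside $[k(l+d)-d,(k+1)(l+d)]$, and $\|\eta_k^{(i)}\|_\infty\le C\,d^{-i}$ for $i=1,2$ (with $C$ depending only on $\chi$). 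Since the right cut-off of $\eta_{k-1}$ is the exact shifted copy of $\chi$ appearing as the left cut-off of $\eta_k$, a telescoping argument (reindexing one of the two sums) yields $\sum_{k\in\Z}\eta_k\equiv 1$.

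Set $\psi_k(x):=\prod_{j=1}^n \eta_{k_j}(x_j)$, giving a smooth partition of unity on $\R^n$ with $\psi_k\equiv 1$ on $\overline{Q}_k$ and locally finitely many nonzero terms at each point. Define $v_{\mathrm{glue}}(x):=\sum_{k\in\Z^n} L(k)\,\psi_k(x)$. Properties (i) and (ii) are immediate: on $\overline{Q}_k$ only the term $k$ contributes and $\psi_k\equiv 1$, so $v_{\mathrm{glue}}=L(k)$ and $\nabla v_{\mathrm{glue}}=0$ there, placing $\operatorname{supp}\nabla v_{\mathrm{glue}}$ in $\overline{D}$.

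For (iii) and (iv), exploit $\sum_k\nabla\psi_k\equiv 0$ (and analogously for the Hessian) to rewrite, for any reference index $k_0$,
\[
\nabla v_{\mathrm{glue}}(x)=\sum_{k\in\Z^n}\bigl(L(k)-L(k_0)\bigr)\,\nabla\psi_k(x),
\]
and similarly for $D^2 v_{\mathrm{glue}}$. At each $x$, at most $2^n$ indices $k$ contribute; choosing $k_0$ among them forces the remaining contributing $k$ to satisfy $\|k-k_0\|_\infty\le 1$, hence $\|k-k_0\|_1\le n$. Iterating the Lipschitz hypothesis $|L(k')-L(k'')|\le 2h$ along a lattice path of length at most $n$ yields $|L(k)-L(k_0)|\le 2hn$. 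Combined with the tensor-product bounds $\|\nabla\psi_k\|_\infty\le C_n/d$ and $\|D^2\psi_k\|_\infty\le C_n/d^2$, this gives (iii) and (iv) with $C_1=C_1(n)$.

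The construction is standard, and no step is a genuine obstacle. The one delicate point is arranging the one-dimensional bumps to form an \emph{exact} partition of unity, which is why the ``difference of two shifted step functions'' form is used: the right cut-off of $\eta_{k-1}$ and the left cut-off of $\eta_k$ are identical shifted copies of $\chi$, so the two pieces of the sum telescope to~$1$ and the tensorization inherits the partition-of-unity property automatically.
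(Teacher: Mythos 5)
Your proof is correct and takes essentially the same route as the paper. The paper's one-line proof says to mollify a piecewise-constant function that jumps on the center hyperplanes of $\overline{D}$; your construction via $v_\mathrm{glue}=\sum_k L(k)\psi_k$ with a tensor-product partition of unity is exactly that mollification written out explicitly (mollifying the indicator of each cell produces precisely a partition of unity of the bump type you build), and the telescoping of the shifted cutoffs together with the at-most-$2^n$ overlap and the iterated Lipschitz bound delivers the stated $h/d$ and $h/d^2$ estimates with a constant depending only on $n$.
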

\begin{proof}
It suffices to take a piecewise constant function that changes values on the center hyperplanes of the set $\overline{D}$ and apply a standard mollifier of
size $d/2$.
\end{proof}

We now collected all the components to construct the supersolution.
\begin{proof}[Proof of Theorem~\ref{thm:pinning}]
First, fix $\bar{f}$ and the function $l(h)$ as in Proposition~\ref{prop:perc_obst}. Then fix $0<F_\mathrm{in}<\frac{\bar{f}}{2}$ 
and $r_\mathrm{in}$ such that $\max\{r_\mathrm{in},\frac{F_\mathrm{in}}{2n}r_\mathrm{in}^2\} \le r_0$.
 
We now need to find $d>0$ and $h>0$, such that, with $r_\mathrm{out}$ chosen as in Proposition~\ref{prop:flat}, 
\begin{align}
F_\mathrm{in} r_\mathrm{in} \ge\;& \abs{F_\mathrm{out}} \left(-r_\mathrm{in} + \frac{r_\mathrm{out}^n}{ r_\mathrm{in}^{n-1}} \right)   \label{eq:v_in_v_out_discontinuity} \\
=\;&\abs{F_\mathrm{out}} \left(-r_\mathrm{in} + \frac{1}{ r_\mathrm{in}^{n-1}} \left(\sqrt{n}\left(l(h)+\frac{d}{2} - r_1\right)\right)^{n} \right)  \nonumber \\
=\;& \abs{F_\mathrm{out}} \left(-r_\mathrm{in} + \frac{1}{ r_\mathrm{in}^{n-1}}  \left(\sqrt{n}\left(C_0 h^{-1/n}+ r_1+\frac{d}{2}\right)\right)^{n} \right)\nonumber
\end{align}
and, at the same time,
\begin{equation}
\label{eq:gluing_bound}
\abs{F_\mathrm{out}} \ge 2 C_1\frac{h}{d^2} \ge 2\norm{\Laplace v_\mathrm{glue}}_\infty.
\end{equation}

Putting the two together, and using the fact that $F_\mathrm{in}$ and $r_\mathrm{in}$ are now fixed, it is sufficient to choose $d$ and $h$ so that
\begin{equation}
\label{eq:connection}
C' > C \frac{h}{d^2} \left(h^{-1/n}+\frac{d}{2} + r_1 \right)^{n}.
\end{equation}
Since one has 
$C \frac{h}{d^2} \left(h^{-1/n}+\frac{d}{2} +r_1\right)^{n} < 2^n C \left( \frac{1}{d^2}  + \frac{h}{d^2}\left(\frac{d}{2} + r_1\right)^{n} \right)$,
one can see that there is such a choice. Now we fix $F_\mathrm{out} = -2 C_1\frac{h}{d^2}$.

Now we choose, according to Proposition~\ref{prop:perc_obst}, the index set $I$ of relevant obstacles.
From Proposition~\ref{prop:glue} and the Lipschitz condition on the percolating cluster of selected boxes from Proposition~\ref{prop:perc_obst}, we know there exists a function $v_\mathrm{glue}$ whose derivative is only supported on the set $\tilde{D}$ and the property 
$v_\mathrm{glue}(x_k) = y_k+r_0$ for all $k \in I$, and $\norm{\Laplace v_\mathrm{glue}}_\infty \le C_1 \frac{h}{d^2}$.

Choosing $0< F^* \le \min\{-\frac{F_\mathrm{out}}{2}, \frac{\bar{f}}{2}\} $ One can now see that the function 
\begin{equation} 
v = v_\mathrm{flat} + v_\mathrm{glue}
\end{equation}
satisfies 
\begin{equation}
0 \ge \Laplace v(x) +  \sum_{i\in I} \bar{f} \phi(x-x_i, v(x)-y_i) + F^* \ge \Laplace v(x) +  f(x, v(x), \omega) + F^*.
\end{equation}
\end{proof}

\begin{remark}
\label{rem:lattice}
For a slightly different model, if the pinning sites are centered on a regular lattice, i.e., $f(x,y,\omega)= \sum_{i \in \Z^{n}, j\in \Z+1/2} f_{i,j}(\omega) \phi(x-i, y-j)$, there is a lower bound for $h$---one can not make the boxes more shallow than the lattice
spacing. This leads to the fact that there might not exist a $d$ satisfying the estimate~\eqref{eq:connection}. For $n=1$, one can still find
the supersolution in the described way, since the scaling of the gradient of $v_\mathrm{out}$ with the distance $d$ works favorably. In particular, the 
construction works for the model used in~\cite{Coville_10a}.

For $n \ge 2$,
the construction only works for $\bar{f}$ sufficiently large. Depending on distribution of
$f_1$, such a choice for $\bar{f}$ might not be possible.
\end{remark}

We now turn towards the construction of a supersolution for the mean curvature flow. The theorem itself remains unchanged. 
\begin{theorem}[Existence of a pinned solution for MCF]
\label{thm:pinningMCF}
If Conditions~\ref{cond:obst} and~\ref{hyp:random_obstacle_distribution} are satisfied, then there exists $F^*>0$ and a non-negative $w \colon \R^n\times\Omega \to [0,\infty)$ so that  a.s., $0 \ge \kappa(w(x,\omega)) + f(x,w(x,\omega)) + F^*$ a.s.,
i.e., any solution to~\eqref{eq:MCF} with initial condition~\eqref{eq:icMCF} and $F\le F^*$ gets pinned.
\end{theorem}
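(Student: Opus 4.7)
The plan is to mirror the QEW construction from the proof of Theorem~\ref{thm:pinning}, replacing each linear ingredient by its nonlinear mean-curvature analog, and showing that the nonlinearity contributes only a small correction that can be absorbed into the available slack in the parameters. Concretely, I would redefine $v_\mathrm{local}$ by replacing the two Poisson problems in Definition~\ref{def:local} with constant mean-curvature problems: find radial $v_\mathrm{in}$ on $B_{r_\mathrm{in}}$ with $\kappa(v_\mathrm{in})=F_\mathrm{in}$ and $v_\mathrm{in}|_{\partial B_{r_\mathrm{in}}}=0$, and radial $v_\mathrm{out}$ on $B_{r_\mathrm{out}}\setminus B_{r_\mathrm{in}}$ with $\kappa(v_\mathrm{out})=F_\mathrm{out}$ and the same boundary conditions as before. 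Both reduce after one integration to explicit first-order equations of the form $v'(r)/\sqrt{1+v'(r)^2}=G(r)$, giving a spherical cap inside and an analogous profile outside, valid as long as $|G|<1$, which forces only mild extra smallness on $F_\mathrm{in} r_\mathrm{in}$ and $|F_\mathrm{out}|r_\mathrm{out}^n/r_\mathrm{in}^{n-1}$. The analog of Proposition~\ref{prop:local_sol} then goes through: the graph of $v_\mathrm{in}$ sits in a slab of controlled height, the jump condition~\eqref{eq:jump_cond} is replaced by the corresponding inequality on the scaled radial derivatives $v'/\sqrt{1+(v')^2}$ at $\partial B_{r_\mathrm{in}}$, and the pointwise minimum of two viscosity supersolutions of the degenerate-elliptic operator $\kappa$ is again a viscosity supersolution. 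Propositions~\ref{prop:perc_obst} and~\ref{prop:flat} transfer verbatim, producing $w_\mathrm{flat}(x)=\min_{i\in I} v_\mathrm{local}(x-x_i)$, a.s.\ a viscosity supersolution of $\kappa(w_\mathrm{flat})+\bar f\sum_{i\in I}\phi(\cdot-x_i,w_\mathrm{flat}+r_0)+F\le 0$ on $\R^n$.

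The real work is in the gluing, since $\kappa$ does not split over sums. I set $w=w_\mathrm{flat}+v_\mathrm{glue}$ with $v_\mathrm{glue}$ as in Proposition~\ref{prop:glue}, satisfying $v_\mathrm{glue}(x_i)=y_i+r_0$ for $i\in I$. On every cube $\overline{Q}_k$, $\nabla v_\mathrm{glue}$ and $D^2 v_\mathrm{glue}$ vanish, so $\kappa(w)=\kappa(w_\mathrm{flat})$ and the supersolution inequality is inherited exactly as in the QEW proof. On the complement $\overline{D}$, one has $f\le 0$ automatically (because $\phi\le 0$ and $f_k\ge 0$), so only $\kappa(w)\le -F^*$ needs to be checked. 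Here I would use the identity
\[
n\kappa(w)=\frac{1}{\sqrt{1+|\nabla w|^2}}\left(\Laplace w-\frac{(\nabla w)^\top D^2 w\,\nabla w}{1+|\nabla w|^2}\right)
\]
together with the explicit bounds on $|\nabla w_\mathrm{flat}|$ from the spherical-cap profiles and $|\nabla v_\mathrm{glue}|\le C_1 h/d$ from Proposition~\ref{prop:glue}. Choosing the parameters so that $|\nabla w|$ stays uniformly small makes the nonlinear correction of order $|\nabla w|^2\,\|D^2 w\|$, which can be absorbed by strengthening~\eqref{eq:gluing_bound} from $|F_\mathrm{out}|\ge 2C_1 h/d^2$ to $|F_\mathrm{out}|\ge (2+\eta)C_1 h/d^2$ for some small $\eta>0$; this only tightens, but does not invalidate, the existence of $d$ and $h$ satisfying~\eqref{eq:connection}.

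The main obstacle is precisely this control of the coupled gradient and Hessian in the gluing region: unlike the Laplacian, the mean-curvature operator mixes $w_\mathrm{flat}$ and $v_\mathrm{glue}$ through the nonlinear factor $(1+|\nabla w|^2)^{-1/2}$, so one must verify quantitatively that all slopes stay in the small-gradient regime where $\kappa$ is a controlled perturbation of $\Laplace/n$. Once this is arranged, setting $F^*=\min\{-F_\mathrm{out}/2,\bar f-F_\mathrm{in}\}$ exactly as in the QEW proof yields $0\ge \kappa(w)+f(\cdot,w,\omega)+F^*$ almost surely, and pinning of any solution to~\eqref{eq:MCF} below $w$ follows from the standard comparison principle for graphical mean curvature flow.
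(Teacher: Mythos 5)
Your outline follows the paper's strategy closely: replace the Poisson problems by radial constant-mean-curvature graphs (the paper calls them Delaunay surfaces), verify that the pointwise minimum of viscosity supersolutions is a supersolution, reuse the percolation and Voronoi structure verbatim, and then control the effect of adding $v_\mathrm{glue}$ to the mean curvature of $w_\mathrm{flat}$. However, there is a genuine gap in the one step where the nonlinearity actually bites, and it is exactly the step you tag as ``the real work.''

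Your claim is that the nonlinear correction in the gluing region is ``of order $|\nabla w|^2\|D^2 w\|$'' and can be absorbed by strengthening~\eqref{eq:gluing_bound} from $|F_\mathrm{out}|\ge 2C_1h/d^2$ to $|F_\mathrm{out}|\ge(2+\eta)C_1h/d^2$ for a small $\eta>0$. That is too optimistic. The quantity that must be estimated is not $\kappa(w)-\Laplace w/n$ but $\kappa(w_\mathrm{out}+v_\mathrm{glue})-\kappa(w_\mathrm{out})$, and when one expands this (as the paper does) one gets a zoo of cross terms, schematically
$\mathcal{O}(\norm{\Laplace v_\mathrm{glue}}_\infty)$,
$\mathcal{O}(\norm{D^2 w_\mathrm{out}}_\infty\norm{\nabla v_\mathrm{glue}}_\infty\norm{\nabla w_\mathrm{out}}_\infty)$,
$\mathcal{O}(\norm{D^2 v_\mathrm{glue}}_\infty\norm{\nabla w_\mathrm{out}}_\infty^2)$,
$\mathcal{O}(\norm{D^2 v_\mathrm{glue}}_\infty\norm{\nabla v_\mathrm{glue}}_\infty^2)$, and so on.
With $\norm{\nabla v_\mathrm{glue}}_\infty\sim h/d$, $\norm{D^2 v_\mathrm{glue}}_\infty\sim h/d^2$, and the Delaunay-profile estimates $\norm{\nabla w_\mathrm{out}}_\infty,\norm{D^2 w_\mathrm{out}}_\infty=\mathcal{O}(c)$ (which you would still have to establish; the paper derives them via the function $g(r_\mathrm{out},c)$), the term $\norm{D^2 w_\mathrm{out}}_\infty\norm{\nabla v_\mathrm{glue}}_\infty\norm{\nabla w_\mathrm{out}}_\infty$ alone is $\mathcal{O}(c^2 h/d)$, whereas $|F_\mathrm{out}|=c\,r_\mathrm{in}^{n-1}/r_\mathrm{out}^n$. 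Dominating such terms requires a condition of the form $-F_\mathrm{out}>C\,h/d$, i.e., a different power of $d$ than in the QEW case, not a mere adjustment of the numerical constant in front of $h/d^2$. The paper notices exactly this and replaces the QEW constraint by $-F_\mathrm{out}>C\,h/d$; it then has to re-check that such $h$, $d$ can still be chosen compatibly with~\eqref{eq:connection} and the percolation scaling of $l(h)$. Your $(2+\eta)$-strengthening does not carry out this reconciliation and, for the cross terms just identified, is not visibly sufficient.

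A smaller missing piece: you assert without proof that the radial-derivative function $g(r_\mathrm{out},c)$ (the slope of $w_\mathrm{out}$ at $r_\mathrm{in}$) scales the same way as for the harmonic $v_\mathrm{out}$. This is true but needs the explicit computation with $F_\mathrm{out}=-c\,r_\mathrm{in}^{n-1}/r_\mathrm{out}^n$ and the resulting bound $g(r_\mathrm{out},c)<C_2c$ for $c$ small, which is what licenses choosing the jump condition at $\partial B_{r_\mathrm{in}}$ uniformly in $r_\mathrm{out}$. Both gaps are closable, but as written the proposal skips precisely the quantitative parameter-matching that makes the MCF case harder than the QEW case.
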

The idea of the proof is to construct a local solution and a gluing function for the mean curvature operator and then provide estimates akin to Propositions~\ref{prop:local_sol} and~\ref{prop:glue} for these functions. The rest of the proof, modulo an estimate for the behavior of the nonlinear mean curvature operator when adding the local solution and the gluing function, can remain unchanged. 

\begin{definition}[Local solution for MCF]
As in Definition~\ref{def:local}, fix $r_\mathrm{in} \in (0,r_0)$, $r_\mathrm{out}>0$, $F_\mathrm{in}>0$, and $F_\mathrm{out}<0$, but now making sure that $r_\mathrm{in} \le F_\mathrm{in}$ and $\abs{F_\mathrm{out}}$ is sufficiently small so that $\frac{(r_\mathrm{out} - r_\mathrm{in})^{n-1}}{r_\mathrm{out}^n-(r_\mathrm{out}- r_\mathrm{in})^n} >  \abs{F_\mathrm{out}}$. We construct the local solution from rotationally symmetric surfaces of constant mean curvature, so called Delauney-Surfaces~\cite{Delaunay_41a}.

Let $w_\mathrm{in}\colon B_{r_\mathrm{in}} \to \R$ be given as $w_\mathrm{in}(x) = -\sqrt{F_\mathrm{in}^2 -\abs{x}^2}+\sqrt{F_\mathrm{in}^2-r_\mathrm{in}^2}$. The radially symmetric function $w_\mathrm{out} \colon B_{r_\mathrm{out}} \setminus B_{r_\mathrm{in}} \to \R$ is defined by an elliptic integral as
\begin{equation*}
w_\mathrm{out}(r) =  \int_0^{r_\mathrm{out}-r} \frac{-1}{\sqrt{\frac{(r_\mathrm{out} - \rho)^{2n-2}}{(r_\mathrm{out}^n-(r_\mathrm{out}-\rho)^n)^2 F_\mathrm{out}^2}-1}} \,\mathrm{d}\rho - C,
\end{equation*}
where $C =  \int_0^{r_\mathrm{out}-r_\mathrm{in}} \frac{-1}{\sqrt{\frac{(r_\mathrm{out} - \rho)^{2n-2}}{(r_\mathrm{out}^n-(r_\mathrm{out}-\rho)^n)^2 F_\mathrm{out}^2}-1}} \,\mathrm{d}\rho$.

We define
\begin{equation*}
w_\mathrm{local} := \left\{ \begin{array}{ll}
w_\mathrm{in} & \textrm{on $B_{r_\mathrm{in}}$}, \\
w_\mathrm{out} & \textrm{on $B_{r_\mathrm{out}} \setminus B_{r_\mathrm{in}}$}, \\
\lim_{r\to r_\mathrm{out}} w_\mathrm{out}(r) & \textrm{on $\partial B_{r_\mathrm{out}}$}, \\
\infty & \textrm{otherwise}. \end{array}\right.
\end{equation*}
\end{definition}

\begin{proposition}
We have $w_\mathrm{local}$ is finite on $B_{r_\mathrm{out}}$. Furthermore, it holds that
\begin{equation}
0 \ge  \kappa(w_\mathrm{local}) + \bar{f} \phi(\cdot, w_\mathrm{local}(\cdot)+r_0) +F \quad \textrm{on $B_{r_\mathrm{out}}$}
\end{equation}
in the sense of viscosity solutions, if
\begin{romlist}
\item $0 \ge F_\mathrm{in}-\bar{f}+F$,
\item $0 \ge F_\mathrm{out}  + F$,
\item $F_\mathrm{in} - \sqrt{F_\mathrm{in}^2 -r_\mathrm{in}^2} \le r_0$.
\item $\frac{1}{\sqrt{\frac{(r_\mathrm{out} - r_\mathrm{in})^{2n-2}}{(r_\mathrm{out}^n-(r_\mathrm{out}-r_\mathrm{in})^n)^2 F_\mathrm{out}^2}-1}}
< \frac{r_\mathrm{in}}{\sqrt{F_\mathrm{in}^2-r_\mathrm{in}^2}}$.
\end{romlist}
\end{proposition}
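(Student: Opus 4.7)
The plan is to follow the blueprint of Proposition~\ref{prop:local_sol}, replacing each linear ingredient by its constant-mean-curvature (CMC) analogue and then using condition (iv) to glue the two pieces in a viscosity-consistent way. First I would identify the two radial pieces as explicit CMC graphs. A direct computation with $u(r) = -\sqrt{R^2-r^2}+\mathrm{const}$ yields $\kappa(u)\equiv 1/R$, so $w_\mathrm{in}$ is a spherical cap of radius $F_\mathrm{in}$ with constant positive mean curvature. For the outer piece I would recognise the integrand as coming from the first integral of the radial ODE for a CMC graph of curvature $F_\mathrm{out}$: writing $\div(\nabla u/\sqrt{1+\abs{\nabla u}^2}) = nF_\mathrm{out}$ in radial form gives $r^{n-1}u'(r)/\sqrt{1+u'(r)^2} = F_\mathrm{out} r^n + C_1$; choosing $C_1 = -F_\mathrm{out} r_\mathrm{out}^n$ so that $u'$ vanishes at $r = r_\mathrm{out}$ (horizontal tangent, the analogue of the Neumann condition in Definition~\ref{def:local}) and inverting recovers the integrand in the definition. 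The smallness assumption on $\abs{F_\mathrm{out}}$ imposed in the definition is precisely what keeps the radicand positive on $[r_\mathrm{in}, r_\mathrm{out})$, so the integral converges and $w_\mathrm{out}$ is smooth and finite on the annulus, giving the finiteness claim on $B_{r_\mathrm{out}}$.

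Next I would verify the supersolution inequality separately on each open piece. On $B_{r_\mathrm{in}}$, hypothesis (iii) together with $r_\mathrm{in}<r_0$ places the graph point $(x, w_\mathrm{in}(x)+r_0)$ inside the $\norm{\cdot}_\infty$-ball of radius $r_0$ around the origin, so Condition~\ref{cond:obst}(ii) gives $\phi(x, w_\mathrm{in}(x)+r_0)\le -1$; combining with the constant mean curvature value of $w_\mathrm{in}$ and with (i) then yields $\kappa(w_\mathrm{in}) + \bar{f}\phi(\cdot,w_\mathrm{in}+r_0) + F \le 0$. On the annulus $B_{r_\mathrm{out}}\setminus\overline{B_{r_\mathrm{in}}}$, one uses only the sign condition $\phi\le 0$ from Condition~\ref{cond:obst}(i); then the constant-curvature value $\kappa(w_\mathrm{out})\equiv F_\mathrm{out}$ combined with (ii) gives $\kappa(w_\mathrm{out}) + \bar{f}\phi + F \le F_\mathrm{out} + F \le 0$.

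The principal obstacle, and the only step genuinely different from the Laplacian proof, is gluing the two pieces across $\partial B_{r_\mathrm{in}}$ in the viscosity sense, since the mean-curvature operator is fully nonlinear. Both radial functions vanish at $r = r_\mathrm{in}$, and condition (iv) is exactly the strict jump-down inequality $w_\mathrm{out}'(r_\mathrm{in}^+)<w_\mathrm{in}'(r_\mathrm{in}^-) = r_\mathrm{in}/\sqrt{F_\mathrm{in}^2-r_\mathrm{in}^2}$, so $w_\mathrm{local}$ has a convex-from-above corner at $\partial B_{r_\mathrm{in}}$. On a two-sided neighbourhood of that sphere, $w_\mathrm{local}$ coincides with the pointwise minimum of smooth radial extensions of $w_\mathrm{in}$ and $w_\mathrm{out}$, each a classical supersolution of the stationary inequality in its own right. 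Since the equation can be written in the proper degenerate-elliptic form $-\kappa(u)-\bar{f}\phi(\cdot,u+r_0)-F\ge 0$, the standard fact that the pointwise minimum of two viscosity supersolutions is a viscosity supersolution applies: any smooth test function touching $w_\mathrm{local}$ from below at a corner point also touches one of the smooth pieces from below, and the classical inequality for that piece transfers immediately to the test function. Outside $\overline{B_{r_\mathrm{out}}}$ the function is $+\infty$, so the viscosity supersolution condition is vacuous there, completing the plan.
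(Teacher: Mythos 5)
Your proposal follows the paper's own proof almost step by step: compute that each radial piece is a constant-mean-curvature graph, verify the supersolution inequality on $B_{r_\mathrm{in}}$ via (i) and (iii) and on the annulus via (ii), and glue at $\partial B_{r_\mathrm{in}}$ using the downward derivative jump guaranteed by (iv). You supply considerably more detail than the paper (the first integral of the radial CMC ODE, and an explicit ``min of two supersolutions'' viscosity argument where the paper just asserts that the curvature at the corner is ``negative in the viscosity sense''), but the underlying structure is identical.

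Two small cautions. First, the phrase that the smooth radial extensions of $w_\mathrm{in}$ and $w_\mathrm{out}$ are ``each a classical supersolution \ldots in its own right'' overstates what you have: the extension of $w_\mathrm{in}$ to radii slightly larger than $r_\mathrm{in}$ is \emph{not} a supersolution there, since once the graph point leaves the cube of radius $r_0$ you only have $\phi\le 0$, and $\kappa(w_\mathrm{in})+F>0$. The gluing argument still goes through because a test function touching $w_\mathrm{local}$ from below at a point $p\in\partial B_{r_\mathrm{in}}$ also touches $w_\mathrm{in}$ from below at $p$, and you only need the classical inequality for $w_\mathrm{in}$ \emph{at} $p$ (where $\phi(p,r_0)\le -1$ still holds); it is worth stating it that way rather than invoking a global min-of-supersolutions lemma. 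Second, you correctly compute $\kappa(w_\mathrm{in})\equiv 1/F_\mathrm{in}$ with the paper's normalization $\kappa=\div\bigl(\nabla u/(n\sqrt{1+|\nabla u|^2})\bigr)$, but you then ``combine with (i)'' as if this value were $F_\mathrm{in}$. As written, hypothesis (i) controls $F_\mathrm{in}$, not $1/F_\mathrm{in}$, so the combination does not literally close; you should either flag this as an apparent notational slip in the proposition (with $F_\mathrm{in}$ meant to denote the curvature rather than the cap radius, in which case (iii) would need adjusting) or restate (i) as $0\ge 1/F_\mathrm{in}-\bar f+F$ before using it.
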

\begin{proof}
The first statement is clear by inspection, since under the conditions on $r_\mathrm{in}$, $F_\mathrm{in}$, $r_\mathrm{out}$, and $F_\mathrm{out}$ the functions $w_\mathrm{in}$ and $w_\mathrm{out}$ remain finite. The second statement holds due to {\it i)} and {\it ii)} on the inside of the sphere $B_{r_\mathrm{in}}$ (Condition {\it iii)} confines the graph of $w_\mathrm{in}$ to the set where $\phi\le -1$) and on the inside of the annulus $B_{r_\mathrm{out}} \setminus B_{r_\mathrm{in}}$. Condition {\it iv)} ensures that the derivative of $w_\mathrm{local}$ jumps only downwards going radially across the boundary from the sphere to the annulus, so that the mean curvature of $w_\mathrm{local}$ at the boundary is negative in the viscosity sense.
\end{proof}
\begin{proof}[Proof of Theorem~\ref{thm:pinningMCF}]
In order to employ the construction from the proof of Theorem~\ref{thm:pinning}, we first  need to make sure that the scaling of 
$$
\abs{\partial_r w_\mathrm{out}(r)|_{r=r_\mathrm{in}}} = \frac{1}{\sqrt{\frac{(r_\mathrm{out} - r_\mathrm{in})^{2n-2}}{(r_\mathrm{out}^n-(r_\mathrm{out}-r_\mathrm{in})^n)^2 F_\mathrm{out}^2}-1}}
$$
 is suitable. Consider thus $F_\mathrm{out} = -c \frac{r_\mathrm{in}^{n-1}}{r_\mathrm{out}^n}$ and note that $\abs{\partial_r w_\mathrm{out}(r)|_{r=r_\mathrm{in}}}$ is then
 given by
\begin{equation*}
g(r_\mathrm{out},c) := \frac{1}{\sqrt{\frac{(r_\mathrm{out} - r_\mathrm{in})^{2n-2}{r_\mathrm{out}^{2n}}}{(r_\mathrm{out}^n-(r_\mathrm{out}-r_\mathrm{in})^n)^2 c^2 r_\mathrm{in}^{2n-2}}-1}}.
\end{equation*}
One can see by a simple calculation that for $c$ small enough there exists $C_2>0$ so that  $g(r_\mathrm{out},c)<C_2 c$. This, however, implies that in the correct regime the Delaunay-Surface from the construction of $w_\mathrm{out}$ admits the same scaling properties with respect to $r_\mathrm{out}$ as the function $v_\mathrm{out}$.


The proof of Theorem~\ref{thm:pinningMCF} can now be completed by the same construction as for the semilinear equation. First, fix the supersolution inside the obstacles. This determines the maximal outgoing radial derivative $\partial_r w_\mathrm{in}(r)|_{r=r_\mathrm{in}} := G$ and the radius of the inner sphere $r_\mathrm{in}$. Consider then the function $w_\mathrm{flat}$ constructed analogously to $v_\mathrm{flat}$ above. It is necessary to satisfy (after setting $r_\mathrm{out} = \sqrt{n}(l(h)+\frac{d}{2}-r_1)$)
$$
g(r_\mathrm{out},c) < G
$$
as well as 
\begin{equation}
\label{eq:MCF_scaling}
\abs{F_\mathrm{out}} = c \frac{r_\mathrm{in}^{n-1}}{\left(\sqrt{n}(C_0 h^{-1/n}+\frac{d}{2}-r_1)\right)^n} \ge 2C_1\frac{h}{d^2}.
\end{equation}
The scaling property discussed above ensures that this is possible.

It remains to show that adding the function $v_\mathrm{glue}$ from Proposition~\ref{prop:glue} does not destroy the property of negative mean curvature. Define
$\nu(u) := \sqrt{1+\abs{\nabla u}^2}$. We have, after collecting terms from expanding the divergence in the mean curvature operator,
\begin{align*}
\kappa(w_\mathrm{out} + v_\mathrm{glue})
=\;&  \frac{\Laplace (w_\mathrm{out} + v_\mathrm{glue}) } {\nu(w_\mathrm{out} + v_\mathrm{glue})} \\
&+
\frac{\left( (D^2 w_\mathrm{out} + D^2 v_\mathrm{glue}) \cdot (\nabla w_\mathrm{out} + \nabla v_\mathrm{glue}), \nabla w_\mathrm{out} + \nabla v_\mathrm{glue}\right)}{\nu(w_\mathrm{out} + v_\mathrm{glue})^3} \\
=\;& \kappa(w_\mathrm{out}) \\
&+ \frac{\Laplace v_\mathrm{glue}}{\nu(w_\mathrm{out} + v_\mathrm{glue})} \\
&+ \Laplace u \left(\frac{1}{\nu(w_\mathrm{out} + v_\mathrm{glue})} - \frac{1}{\nu(w_\mathrm{out})}\right)\\
&+ (D^2 w_\mathrm{out} \cdot \nabla w_\mathrm{out}, \nabla w_\mathrm{out})
 \left(\frac{1}{\nu(w_\mathrm{out} + v_\mathrm{glue})^3} - \frac{1}{\nu(w_\mathrm{out})^3}  \right) \\
&+ \frac{(D^2 v_\mathrm{glue} \cdot (\nabla w_\mathrm{out} + \nabla v_\mathrm{glue}), \nabla w_\mathrm{out} + \nabla v_\mathrm{glue})}{\nu(w_\mathrm{out} + v_\mathrm{glue})^3} \\
&+ \frac{ 2(D^2 w_\mathrm{out} \cdot \nabla w_\mathrm{out},\nabla v_\mathrm{glue})}{\nu(w_\mathrm{out} + v_\mathrm{glue})^3} \\
&+ \frac{(D^2 w_\mathrm{out} \cdot \nabla v_\mathrm{glue}, \nabla v_\mathrm{glue})}{\nu(w_\mathrm{out} + v_\mathrm{glue})^3} 
\end{align*}
\begin{align*}
=\;&\kappa(w_\mathrm{out}) + \mathcal{O}(\norm{\Laplace v_\mathrm{glue}}_\infty) + \mathcal{O}(\norm{\Laplace w_\mathrm{out}}_\infty\norm{ \nabla v_\mathrm{glue}}_\infty^2) \\
&+\mathcal{O}(\norm{D^2 w_\mathrm{out}}_\infty \norm{\nabla w_\mathrm{out}}_\infty^2 \norm{\nabla v_\mathrm{glue}}_\infty^2) \\
&+\mathcal{O}(\norm{D^2 v_\mathrm{glue}}_\infty ( \norm{\nabla w_\mathrm{out}}_\infty^2 +\norm{\nabla v_\mathrm{glue}}_\infty^2)   \\
&+ \mathcal{O}(\norm{D^2 w_\mathrm{out}}_\infty ( \norm{\nabla v_\mathrm{glue}}_\infty\norm{\nabla w_\mathrm{out}}_\infty +\norm{\nabla v_\mathrm{glue}}_\infty^2 ).
\end{align*}
Here, $(\cdot,\cdot)$ denotes the scalar product in $\R^n$ and $D^2 v \cdot \nabla w$ denotes the matrix-vector product of the matrix of second derivatives of $v\colon \R^n\to \R$ applied to the gradient vector of $w\colon \R^n\to\R$.
Note that $\norm{\nabla  v_\mathrm{glue}}_\infty = \mathcal{O}(\frac{h}{d})$, $\norm{D^2 v_\mathrm{glue}}_\infty = \mathcal{O}(\frac{h}{d^2})$ (Proposition~\ref{prop:glue}), and
$\norm{\nabla  u_\mathrm{out}}_\infty= \mathcal{O}(c)$. One can see that also $\norm{D^2 u_\mathrm{out}}_\infty = \mathcal{O}(c)$, since for small curvature the gradient term dominates when calculating the second derivatives of the function $v_\mathrm{out}$. All error terms can be made small with respect to $\abs{\kappa(w_\mathrm{out})} = \abs{F_\mathrm{out}} = c \frac{r_\mathrm{in}^{n-1}}{\left(\sqrt{n}(C_0 h^{-1/n}+\frac{d}{2}-r_1)\right)^n}$, by
noting that one can, instead of~\eqref{eq:MCF_scaling}, for a given $C>0$, fix $h$ and $d$ so that $-F_\mathrm{out} > C\frac{h}{d}$.
The rest of the proof then follows that of Theorem~\ref{thm:pinning}. 
\end{proof}

\section{Conclusions}
\label{sec:conclusion}
We have shown that, for our models of interface evolution in random media, a finite critical force is required to propagate the interface through the body. Many questions in this area, however, remain open. It was shown in~\cite{Coville_10a}, that for a model with obstacles on lattice sites\footnote {As pointed out in Remark~\ref{rem:lattice}, our construction of a supersolution for sufficiently small external driving force also works in this `lattice case' for $n=1$.} with random exponentially distributed strength for $n=1$, no more stationary solution can exist if the forcing exceeds a critical value. The question whether interfaces in this case move with a finite speed of propagation is still open and currently under investigation (this is of course trivial for uniformly bounded obstacle strength when also avoiding overlap of obstacles). These two results together would show that there is a transition from a viscous kinetic relation (i.e., {\it velocity{\rm\,=\,}force}) in the microscopic model turns (after a time-rescaling) into a rate independent model for the macroscopic behavior of the system. Such rate independent kinetics are commonly assumed in macroscopic models of phase transformations or plasticity. This article provides a step into deriving this assumption from microscopic viscous kinetics.

\section*{Acknowledgements}
P.~Dondl and M.~Scheutzow acknowledge support from the DFG-funded research group `Analysis and Stochastics in Complex Physical Systems' (FOR 718).


\newcommand{\etalchar}[1]{$^{#1}$}
\def\cprime{$'$}
\providecommand{\bysame}{\leavevmode\hbox to3em{\hrulefill}\thinspace}
\providecommand{\MR}{\relax\ifhmode\unskip\space\fi MR }
\providecommand{\MRhref}[2]{%
  \href{http://www.ams.org/mathscinet-getitem?mr=#1}{#2}
}
\providecommand{\href}[2]{#2}

\end{document}